\documentclass[leqno,12pt]{amsart} 
\usepackage[top=30truemm,bottom=30truemm,left=25truemm,right=25truemm]{geometry}
\usepackage{amssymb}
\usepackage{amsmath}
\usepackage{amsthm}
\usepackage{amscd}
\usepackage{mathrsfs}
\usepackage{graphicx}
\usepackage[dvips]{color}
\usepackage[all]{xy}

\usepackage{url}
\usepackage{comment}
\setcounter{tocdepth}{1}
%
%
%
\theoremstyle{plain} 
\newtheorem{theorem}{\indent\bf Theorem}[section]

\newtheorem{proposition}[theorem]{\indent\bf Proposition}

\theoremstyle{definition} 

%

%


\begin{document}

\title[Sharper $L^2$ extensions]{On sharper estimates of Ohsawa-Takegoshi $L^2$-extension theorem} 

\author[G. Hosono]{Genki Hosono} 

\subjclass[2010]{ 
	32A10, 32A07.
}
%
\keywords{ 
	Ohsawa-Takegoshi $L^2$ extension theorem, optimal estimate, complex Monge-Amp\`{e}re equation
}
\address{
Graduate School of Mathematical Sciences, The University of Tokyo \endgraf
3-8-1 Komaba, Meguro-ku, Tokyo, 153-8914 \endgraf
Japan
}
\email{genkih@ms.u-tokyo.ac.jp}
\maketitle
\begin{abstract}
We present an $L^2$-extension theorem with an estimate depending on the weight functions for domains in $\mathbb{C}$. When the Hartogs domain defined by the weight function is strictly pseudoconvex, this estimate is strictly sharper than known optimal estimates. When the weight function is radial, we prove that our estimate provides the $L^2$-minimum extension.
\end{abstract}

\section{Introduction}
The {\it Ohsawa-Takegoshi $L^2$-extension theorem} \cite{OT} states that, for a bounded pseudoconvex domain $\Omega$, a submanifold $V$ of $\Omega$ with certain conditions and a plurisubharmonic function $\varphi$ on $\Omega$, we can extend holomorphic functions on $V$ to ones on $\Omega$ with a priori $L^2$-estimates independent of $\varphi$.
This theorem and its generalizations are widely used in the studies of several complex variables and complex geometry.

Recently, the {\it optimal estimate} for the $L^2$-extension theorem was proved in \cite{Blo} and \cite{GZ}. Using this estimate, many problems including Suita conjecture were solved.
After that, a new proof of the optimal estimate was given in \cite{BL}.
Here we state the optimal result in the setting of \cite{BL}.

\begin{theorem}[Optimal estimate, {\cite{Blo}, \cite{GZ}, \cite{BL}}]\label{thm:optimal}
Let $\Omega \subset \mathbb{C}^n$ be a bounded pseudoconvex domain and $\varphi \in PSH(\Omega)$ be a plurisubharmonic function.
Let $V$ be a closed submanifold of $\Omega$ with codimension $k$.
Let $G$ be a negative plurisubharmonic function in $\Omega$ such that
$$G(z) \leq \log d^2_V(z) + A(z) \text{ on }\Omega,\text{ and} $$
$$G(z) \geq \log d^2_V(z) - B(z) \text{ near }V,$$
where $A(z)$ and $B(z)$ are continuous functions.
Then, for every $f\in\mathcal{O}(V)$ with $\int_V|f|^2 e^{-\varphi+kB} < +\infty $, there exists $F \in \mathcal{O}(\Omega) $ such that $F|_V = f$ and
$$\int_\Omega |F|^2 e^{-\varphi} \leq \sigma_k \int_V |f|^2 e^{-\varphi+kB}.$$
Here, $\sigma_k$ denotes the volume of the unit ball in $\mathbb{C}^k$.
\end{theorem}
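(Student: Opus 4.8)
The plan is to follow the degeneration-and-convexity method of Berndtsson--Lempert, since the statement is phrased in their setting. First I would carry out the usual preliminary reductions: exhaust $\Omega$ by smoothly bounded strictly pseudoconvex subdomains, regularize $\varphi$ by a decreasing sequence of smooth strictly plurisubharmonic weights (Demailly), and enlarge $A,B$ slightly to smooth functions. It then suffices to produce, on each member of the exhaustion, an extension obeying the estimate with constant $\sigma_k+o(1)$; a normal-families and diagonal argument passes to the limit and recovers the sharp $\sigma_k$. After these reductions the $L^2$-minimal extension $F$ of $f$ (with respect to $e^{-\varphi}$, in the jet sense appropriate to codimension $k$) exists and is unique, and the whole problem collapses to the single inequality $\int_\Omega |F|^2 e^{-\varphi}\le \sigma_k \int_V |f|^2 e^{-\varphi+kB}$.

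The engine is a one-parameter family. For $t\ge 0$ I would introduce degenerating weights built from $G$ --- concretely the minimal extensions $F_t$ attached to the exhaustion $\{G<-t\}$ of a tubular neighbourhood of $V$, or equivalently the minimal extensions on $\Omega$ for the weights $\varphi+\chi_t(G)$ with a suitable family of convex increasing $\chi_t$. The reason to use $G$ rather than $\log d_V^2$ directly is that $G$ is globally plurisubharmonic, so the total space of the family carries a plurisubharmonic weight. Berndtsson's theorem on the log-plurisubharmonic variation of Bergman kernels (positivity of direct images) then shows that
$$ t \longmapsto \log \int_{\{G<-t\}} |F_t|^2 e^{-\varphi} $$
is convex in $t$. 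Viewing $t=-\log|\tau|^2$ as a radial variable in an auxiliary disk makes the family rotationally symmetric, so the subharmonicity furnished by Berndtsson's theorem upgrades to genuine convexity, which is exactly what lets me compare the two ends of the family.

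The two ends are arranged as follows. At $t=0$ the family reproduces the extension problem on $\Omega$, yielding the left-hand side $\int_\Omega |F|^2 e^{-\varphi}$. As $t\to\infty$ the sublevel set $\{G<-t\}$ collapses onto $V$; on the codimension-$k$ fibres it is, up to the controlled error measured by $A$ and $B$, a ball of radius $\sim e^{-(t+O(1))/2}$, so integrating $|F_t|^2\approx|f|^2$ over the thin shells and dividing by the fibre volume produces precisely $\sigma_k\int_V |f|^2 e^{-\varphi}e^{kB}$ in the limit. This is where the constant $\sigma_k$ and the weight correction $e^{kB}$ are born: the former from the Euclidean volume asymptotics of the fibres, the latter from the lower barrier $G\ge \log d_V^2 - B$ raised to the $k$ fibre dimensions. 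Feeding both ends into the convexity then closes the inequality in the required direction.

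The main obstacle --- where essentially all the work lives --- is the sharp asymptotic analysis near $V$: one must show that the mean of $|F_t|^2 e^{-\varphi}$ over the shells converges to the $V$-integral with \emph{exactly} the constant $\sigma_k$ and no loss, which forces one to track the discrepancy between $G$ and $\log d_V^2$ through $\log d_V^2 - B\le G\le \log d_V^2 + A$ and, crucially, to verify that the upper barrier $A$ never enters the final constant. A secondary difficulty is justifying the convexity when $\varphi$ is merely plurisubharmonic and $\Omega$ only pseudoconvex, which is why the regularization is done first and the stability of the constants under the limit must be checked. As an alternative I note that the same estimate follows from the twisted Hörmander--Bochner--Kodaira route of Błocki and Guan--Zhou: extend $f$ to a smooth $\tilde f$ supported in a tube about $V$, solve $\bar\partial u=\bar\partial\tilde f$ with $u|_V=0$ under a twisted $L^2$-estimate whose two auxiliary functions are prescribed functions of $G$ satisfying an explicit ODE, and let the tube shrink; there the sharp constant emerges from the ODE rather than from a convexity, but the same near-$V$ asymptotics remain the crux.
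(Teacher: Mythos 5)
You should first be aware of a structural point: the paper never proves this statement. Theorem \ref{thm:optimal} is quoted as a known result, imported wholesale from \cite{Blo}, \cite{GZ} and \cite{BL} (the formulation chosen is that of \cite{BL}), and the paper's own contribution (Theorem \ref{thm:main_s}) only \emph{applies} it. So there is no in-paper argument to compare yours against; the relevant benchmark is the Berndtsson--Lempert proof itself, which your outline is plainly modelled on, and at the level of strategy your précis is faithful: minimal extensions attached to a degenerating family built from $G$ (the global plurisubharmonicity of $G$ being exactly why it replaces $\log d_V^2$), log-convexity of the minimal norm via positivity of direct images with $t=-\log|\tau|^2$ as a radial variable, and sharp fibre-volume asymptotics near $V$ producing $\sigma_k$ and $e^{kB}$. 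Your closing paragraph also correctly identifies the Błocki/Guan--Zhou twisted-$\bar\partial$ route as a genuinely different alternative.

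However, taken as a proof rather than a roadmap, the sketch has a real gap at the step you describe as ``feeding both ends into the convexity then closes the inequality.'' Convexity plus the one-sided asymptotic bound at the degenerate end does \emph{not} suffice: if $g(t)$ denotes the log of the minimal norm along the family, a convex $g$ with $g(t)\le -kt + C + o(1)$ as $t\to\infty$ can still satisfy $g(0)>C$ (take $g(t)=-kt+e^{-t}$, which is convex, has $g(t)+kt\to 0$, yet $g(0)=1$). What the actual argument requires in addition is a complementary lower bound at the degenerate end --- a crude one suffices, and this is where the upper barrier $A$ quietly does work, so your assertion that one must ``verify that the upper barrier $A$ never enters'' is right about the final \emph{constant} but wrong if read as saying $A$ plays no role: it is needed to pin down the asymptotic slope, after which convexity yields the derivative/monotonicity control ($g'(t)\uparrow -k$, so $g(0)\le g(t)-tg'(t)\le C+o(1)$ along a suitable sequence) that transfers the bound from $t=\infty$ back to $t=0$ in the correct direction. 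A second, smaller gap: the convexity you invoke is not Berndtsson's Bergman-kernel theorem off the shelf --- for minimal extensions of a \emph{fixed} $f$ one needs Lempert's refinement (positivity of direct images applied through a duality argument), which in \cite{BL} is a theorem in its own right, not a citation. Neither gap is a wrong idea; both are places where the sketch defers precisely the content that makes \cite{BL} a paper.
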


In this paper, we call the function $G$ satisfying the conditions above a {\it Green-type function on $\Omega$ with poles along $V$}.

Note that this estimate does not depend on the weight functions $\varphi$. Thus we may sharpen the estimate if we allow the constant to depend on the weights. 
In this paper, using Theorem \ref{thm:optimal}, we prove a sharper estimate depending on weights $\varphi$ for domains in $\mathbb{C}$.

Let $\Omega$ be a bounded domain in $\mathbb{C}$ with $0 \in \Omega$.
For a subharmonic function $\varphi$ on $\Omega$, we will define a domain $\widetilde{\Omega}$ in $\mathbb{C}^2$ as
$$\widetilde{\Omega} := \{(z,w) \in \mathbb{C}^2: z \in \Omega, |w|^2 < e^{-\varphi(z)} \}. $$
Note that $\widetilde{\Omega}$ is pseudoconvex since $\varphi$ is subharmonic.
We will assume that there exists a Green-type function on $\widetilde{\Omega}$ with poles along $\{z=0\}$, i.e.\ there exists a negative plurisubharmonic function $\widetilde{G} = G_{\{z=0\}, \widetilde{\Omega}}$  with $\log |z|^2 + \widetilde{A}(z,w) \geq \widetilde{G} \geq \log |z|^2 - \widetilde{B}(z,w)$ for some continuous functions $\widetilde{A}$ and $\widetilde{B}$ on $\widetilde{\Omega}$.
Then our main theorem is as follows:
\begin{theorem}\label{thm:main_s}
Let $\Omega$ be a bounded domain in $\mathbb{C}$ and  $\varphi$ be a subharmonic function on $\Omega$ with $\varphi(0) = 0$.
Let $\widetilde{\Omega}$ and $\widetilde{B}$ as above.
Then, 
\begin{itemize}
	\item[(1)] there exists a holomorphic function $f \in \mathcal{O}(\Omega)$ such that $f(0) = 1$ and $$\int_\Omega|f(z)|^2 e^{-\varphi(z)} d\lambda(z) \leq \int_{|w|<1} e^{\widetilde{B}(0,w)} d\lambda(w).$$
	\item[(2)] Moreover, if $\widetilde{\Omega}$ is strictly pseudoconvex and $\Omega$ admits the Green function $G_{\Omega,0}$ with a pole at $0$, then (1) can be strictly sharper than the estimate in Theorem \ref{thm:optimal}. Precisely, if $\widetilde{\Omega}$ is strictly pseudoconvex, there exist functions $\widetilde{G}$, $\widetilde{A}$ and $\widetilde{B}$ satisfying the conditions above and
$$\int_{|w|<1} e^{\widetilde{B}(0,w)} d\lambda(w) < \pi e^{B(0)},$$
where $B(z) = G_{\Omega, 0} - \log |z|^2$ is a difference between the Green function on $\Omega$ and $\log |z|^2$.
\end{itemize}
\end{theorem}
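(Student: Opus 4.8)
The plan is to lift the extension problem on $\Omega$ to the optimal $L^2$-extension problem on the Hartogs domain $\widetilde\Omega\subset\mathbb C^2$ along the disc $V:=\{z=0\}\cap\widetilde\Omega=\{0\}\times\{|w|<1\}$, where the last equality uses $\varphi(0)=0$, and to deduce (1) from Theorem \ref{thm:optimal} by a Fourier decomposition in the fibre variable $w$. For (2) I would identify $\pi e^{B(0)}$ with the value of the very same bound for the \emph{pullback} Green-type function $\widetilde G(z,w)=G_{\Omega,0}(z)$, and then produce a strictly better admissible $\widetilde G$ — the pluricomplex Green function of $\widetilde\Omega$ — using the comparison principle for the complex Monge--Amp\`ere operator together with strict pseudoconvexity.

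For (1), after a routine exhaustion reducing to the case of bounded $\widetilde\Omega$, I apply Theorem \ref{thm:optimal} on $\widetilde\Omega$ with trivial weight, the submanifold $V$ of codimension $k=1$ — here $d_V(z,w)=|z|$, so $\log d_V^2=\log|z|^2$ and the hypotheses on $\widetilde G,\widetilde A,\widetilde B$ are exactly the standing assumptions — and the datum $f_0\equiv 1\in\mathcal O(V)$ (the statement being trivial when its right-hand side is infinite). As $\sigma_1=\pi$, this yields $\widetilde F\in\mathcal O(\widetilde\Omega)$ with $\widetilde F|_V=1$ and $\int_{\widetilde\Omega}|\widetilde F|^2\,d\lambda\leq \pi\int_{|w|<1}e^{\widetilde B(0,w)}\,d\lambda(w)$. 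The crucial point is a fibrewise orthogonality: each fibre $\{|w|^2<e^{-\varphi(z)}\}$ is rotation invariant, so expanding $\widetilde F(z,w)=\sum_{j\geq0}a_j(z)w^j$ in the fibre and integrating monomials gives
\[
\int_{\widetilde\Omega}|\widetilde F|^2\,d\lambda=\sum_{j\geq0}\frac{\pi}{j+1}\int_\Omega|a_j(z)|^2e^{-(j+1)\varphi(z)}\,d\lambda(z)\geq\pi\int_\Omega|a_0(z)|^2e^{-\varphi(z)}\,d\lambda(z).
\]
Setting $f(z):=a_0(z)=\widetilde F(z,0)\in\mathcal O(\Omega)$, the normalization $\widetilde F|_V=1$ forces $f(0)=\widetilde F(0,0)=1$, and chaining the two estimates and dividing by $\pi$ gives precisely the bound in (1).

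For (2), I first record that $\widetilde G(z,w):=G_{\Omega,0}(z)$ is an admissible Green-type function on $\widetilde\Omega$: it is negative and plurisubharmonic, and $G_{\Omega,0}=\log|z|^2+B(z)$ supplies the two-sided comparison, with $\widetilde B(0,\cdot)$ constant along $V$, so that the bound of (1) for this choice equals exactly the right-hand side $\pi e^{B(0)}$ obtained by applying Theorem \ref{thm:optimal} directly to $\Omega$. Writing $R(w):=\lim_{z\to0}\big(\widetilde G(z,w)-\log|z|^2\big)$ for the Robin function of an admissible $\widetilde G$ along $V$, the optimal choice of $\widetilde B$ near $V$ gives $\widetilde B(0,w)=-R(w)$; hence a $\widetilde G$ with \emph{pointwise larger} Robin function yields a pointwise smaller integrand $e^{\widetilde B(0,w)}$, and (2) reduces to exhibiting an admissible $\widetilde G$ whose Robin function strictly exceeds the pullback's on a set of positive measure. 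My candidate is $\widetilde G=g_{\widetilde\Omega}(\cdot\,;V)$, the pluricomplex Green function of $\widetilde\Omega$ with poles along $V$, which for strictly pseudoconvex $\widetilde\Omega$ is continuous up to $\partial\widetilde\Omega$, vanishes there, is maximal on $\widetilde\Omega\setminus V$, and dominates every competitor — in particular the pullback.

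The decisive step, and the main obstacle, is the strict increase of the Robin function. The pullback $G_{\Omega,0}(z)$ is itself maximal off $V$ — being harmonic in $z$ and independent of $w$, it satisfies $(\ddbar G_{\Omega,0})^2\equiv0$ — yet it does not vanish on the top boundary $T=\{(z,w):|w|^2=e^{-\varphi(z)}\}$, where $G_{\Omega,0}(z)<0=g_{\widetilde\Omega}$. By the comparison principle for the Monge--Amp\`ere operator this forces $g_{\widetilde\Omega}>G_{\Omega,0}(z)$ throughout the interior. The difficulty is that maximal plurisubharmonic functions obey no strong maximum principle, so the interior gain need not persist in the limit $z\to0$ that defines the Robin function; to control this I would use strict pseudoconvexity — equivalently, the strict subharmonicity of $\varphi$ that makes $T$ genuinely curved — to construct, for $w$ in a set of positive measure, a local subsolution near $V$ whose Robin function strictly exceeds that of the pullback. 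Dominating it by $g_{\widetilde\Omega}$ gives the strict increase of $R(w)$ on that set, and integrating over $\{|w|<1\}$ yields $\int_{|w|<1}e^{\widetilde B(0,w)}\,d\lambda(w)<\pi e^{B(0)}$. The radial case, where $g_{\widetilde\Omega}$ and hence the whole estimate can be computed in closed form, both confirms the phenomenon and supplies the model for this barrier.
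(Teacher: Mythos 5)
Your part (1) is correct and is essentially the paper's own argument: you apply Theorem \ref{thm:optimal} on $\widetilde{\Omega}$ with trivial weight to extend $1$ from $\{z=0\}$ and then restrict to $w=0$; your monomial expansion $\int_{\widetilde{\Omega}}|\widetilde{F}|^2 = \sum_j \tfrac{\pi}{j+1}\int_\Omega |a_j|^2 e^{-(j+1)\varphi}$ is just an explicit proof of the mean-value inequality on the rotation-invariant fibres, which is the one-line step the paper invokes.

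In part (2), however, there is a genuine gap, and you have flagged it yourself: the ``decisive step'' --- constructing, for $w$ in a set of positive measure, a local subsolution near $V$ whose Robin function strictly exceeds $B(0)$ --- is never carried out, and it is exactly the content of the strict inequality to be proved. Your worry is well founded (maximal plurisubharmonic functions obey no strong maximum principle, so the interior gain $g_{\widetilde{\Omega}} > G_{\Omega,0}$ obtained from comparison on the top boundary $T$ need not survive the limit $z\to 0$), but the proposal offers no mechanism to overcome it. There is a second, more structural problem with your candidate $\widetilde{G}=g_{\widetilde{\Omega}}(\cdot\,;V)$: since $\varphi(0)=0$, the closure of the pole set $V=\{0\}\times\{|w|<1\}$ meets $\partial\widetilde{\Omega}$ along the circle $\{0\}\times\{|w|=1\}$, and at that corner zero boundary values are incompatible with the $\log|z|^2$ pole; consequently the continuity of $\widetilde{B}$ required by the standing hypotheses, and even the finiteness of $\int_{|w|<1}e^{\widetilde{B}(0,w)}\,d\lambda(w)$, are in doubt for the true pluricomplex Green function.

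The paper disposes of both difficulties with a single device that your proposal misses: it does \emph{not} use the Green function of $\widetilde{\Omega}$, but solves the Bedford--Taylor Dirichlet problem $(dd^c\widetilde{u})^2=0$ (Theorem \ref{thm:solutions of complex MA equations}) with the \emph{truncated} boundary data $-\max(\log|z|^2, C)$ for a sufficiently negative constant $C$, and sets $\widetilde{G}=\log|z|^2+\widetilde{u}$, $\widetilde{A}=\widetilde{u}$, $\widetilde{B}=-\widetilde{u}$, which is automatically continuous. The comparison is then against the explicit pluriharmonic competitor $\widetilde{u}'(z,w)=-B(z)$: the minimum principle (Theorem \ref{thm:minimum principle}) gives $\widetilde{u}(0,w)\geq -B(0)$, while strictness on a set of positive measure comes not from the curvature of $T$ but from the boundary data itself --- on $\{z=0\}\cap\partial\widetilde{\Omega}$ the value of $\widetilde{u}$ is $-C$, which for $C$ negative enough strictly exceeds $-B(0)$, and continuity of $\widetilde{u}$ up to the boundary propagates this strict gain to $\{(0,w): |w|<1\}$ near $|w|=1$. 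In particular, your intuition that strict pseudoconvexity (curvature of $T$) must drive the strict improvement is off-target: in the paper it enters only through the solvability of the Dirichlet problem, and the truncation constant $C$ is what simultaneously resolves the corner incompatibility and produces the strict inequality your barrier construction was supposed to deliver.
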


When we take $\widetilde{G}$ as $\widetilde{G}(z,w) = G_{\Omega,0}(z)$, we have that
$$\int_{|w|<1} e^{\widetilde{B}(0,w)} d\lambda(w) = \pi e^{B(0)}.$$
In this case, (1) is the same to Theorem \ref{thm:optimal} in one-dimensional cases.

The proof of Theorem \ref{thm:main_s} (1) is a simple application of Theorem \ref{thm:optimal}. To prove (2), we will use the existence of solutions of complex Monge-Amp\`{e}re equations.


In radial cases, i.e.\ when $\Omega$ is the unit disc $\Delta \subset \mathbb{C}$ and the weight function $\varphi(z)$ depends only on $|z|$, we can prove that the Theorem \ref{thm:main_s} gives the value of the $L^2$-norm of the $L^2$-minimum extension.
These cases are treated in \S 3.
We do not know if Theorem \ref{thm:main_s} gives the minimum extension in general, even in the case where $\widetilde{\Omega}$ is strictly pseudoconvex.

In Theorem \ref{thm:main_s}, we use a Hartogs domain $\widetilde{\Omega}$ over $\Omega$. This kind of technique is also used in \cite{BL} and \cite{MV} to relax the condition $G<0$ on Green-type functions.

The organization of the paper is as follows. In \S \ref{sect: main}, we prove Theorem \ref{thm:main_s}.
In \S \ref{sect:radial}, we treat the case with radial weight functions.

\section{Proof of the Main Theorem}\label{sect: main}

\begin{proof}[Proof of Theorem \ref{thm:main_s} (1).]
We apply Theorem \ref{thm:optimal} to the submanifold $\{z=0\} \subset \widetilde{\Omega}$ using the Green-type function $\widetilde{G}$.
Here we use the trivial metric $e^{-\widetilde{\varphi}} \equiv 1$ on $\widetilde{\Omega}$.
Then we obtain an extension $F$ on $\widetilde{\Omega}$ of the constant function $1$ on $\{z=0\}$ such that
$$\int_{\widetilde{\Omega}} |F|^2 \leq \pi \int_{|w|<1} e^{B(0,w)}. $$
Consider the following function:
$$f(z) := F(z,0).$$
Then we have $f(0)= 1$ and, by the mean-value inequality, it follows that $$\pi\int_\Omega |f(z)|^2 e^{-\varphi(z)} \leq \int_{\widetilde{\Omega}} |F|^2.$$
Therefore $f$ satisfies the desired conditions.
\end{proof}

To prove (2), we use the theory of complex Monge-Amp\`{e}re equations to produce Green-type functions $\widetilde{G}$. The following existence and uniqueness result for the Dirichlet problem of the complex Monge-Amp\`{e}re equation was obtained by Bedford-Taylor:

\begin{theorem}[{\cite[Theorem D]{BT}}]\label{thm:solutions of complex MA equations}
	Let $\Omega \subset \mathbb{C}^n$ be a bounded strictly pseudoconvex domain, $\phi \in C(\partial \Omega)$, and $f \in C(\overline{\Omega})$, $f \geq 0$. Then there exists a unique function $u \in C(\overline{\Omega}) \cap PSH(\Omega)$ such that
	\begin{align*}
	(dd^c u)^n &= f  \text{ on }\Omega, \text{ and}\\
	u &=\phi \text{ on }\partial \Omega.
	\end{align*}
\end{theorem}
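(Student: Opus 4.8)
The plan is to follow the classical Perron method, which splits into a uniqueness half governed by the comparison principle and an existence half built from an upper envelope of subsolutions. The indispensable foundation is the theory of the operator $(dd^c u)^n$ on locally bounded plurisubharmonic functions: for such $u$ this is a well-defined positive Radon measure, and I would first record its two structural properties. (i) \emph{Weak continuity}: if $u_j$ are uniformly bounded psh functions converging to a psh $u$ monotonically (increasing or decreasing), or locally uniformly, then $(dd^c u_j)^n \to (dd^c u)^n$ weakly as measures. (ii) \emph{Comparison principle}: for $u,v\in PSH(\Omega)\cap L^\infty$ with $\liminf_{z\to\partial\Omega}(u-v)\ge 0$ one has $\int_{\{u<v\}}(dd^c v)^n \le \int_{\{u<v\}}(dd^c u)^n$. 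Both are verified for smooth psh functions by a direct integration-by-parts computation and then extended through the approximation in (i).

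Uniqueness is the easy half. From (ii) I would derive the domination principle: if $(dd^c v)^n\ge(dd^c u)^n$ as measures and $\liminf_{z\to\partial\Omega}(u-v)\ge 0$, then $u\ge v$ on $\Omega$; the contradiction on $\{u<v\}$ is forced by perturbing $v$ to $v+\varepsilon|z|^2$, which strictly increases its Monge--Amp\`ere mass. Applying this symmetrically to two solutions $u,v$ with $(dd^c u)^n=(dd^c v)^n=f$ and $u=v=\phi$ on $\partial\Omega$ yields both $u\ge v$ and $v\ge u$, hence $u=v$.

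For existence I would use the Perron--Bremermann envelope. Let $\mathcal F$ be the family of $v\in PSH(\Omega)$, bounded above, with $\limsup_{z\to\zeta}v(z)\le\phi(\zeta)$ and $(dd^c v)^n\ge f\,dV$, and set $u=\sup_{v\in\mathcal F}v$, with $u^*$ its upper semicontinuous regularization. Strict pseudoconvexity enters decisively here: a smooth strictly plurisubharmonic exhaustion $\rho$ (with $\rho<0$ in $\Omega$, $\rho=0$ on $\partial\Omega$, $dd^c\rho\ge c\,dd^c|z|^2$) lets me build, for $A$ large, the subsolution $U_\phi+A\rho$, where $U_\phi$ is the maximal psh function with boundary values $\phi$; since $f$ is bounded, $A^n(dd^c\rho)^n\ge f\,dV$, so $\mathcal F\ne\varnothing$, while $U_\phi$ is an upper barrier showing $u$ is bounded. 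Squeezing $u$ between the two barriers $U_\phi+A\rho$ and $U_\phi$, both tending to $\phi$ at the boundary, gives $u^*\in C(\overline\Omega)$ with $u^*|_{\partial\Omega}=\phi$, and $u^*\in PSH(\Omega)$ by Choquet's lemma.

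It remains to show $(dd^c u^*)^n=f$. The inequality $(dd^c u^*)^n\ge f$ expresses that the envelope of subsolutions is again a subsolution, which I would obtain by taking $v_j\in\mathcal F$ increasing to $u$ almost everywhere and invoking the continuity in (i) for increasing sequences. The reverse inequality $(dd^c u^*)^n\le f$ is where the main obstacle lies, and is handled by a local \textbf{bump} argument: if $(dd^c u^*)^n>f$ on some small ball $B$, I solve the Dirichlet problem on $B$ with boundary data $u^*|_{\partial B}$ to produce $\tilde u\ge u^*$ still lying in $\mathcal F$ but strictly larger at the center, contradicting maximality of the envelope. This requires solvability on balls, which I expect to be the genuinely hard step: for smooth strictly positive densities it follows from classical elliptic theory (the continuity method, using a priori estimates for the fully nonlinear equation $\det(u_{i\bar j})=\tilde f$), and the passage to a general continuous $f\ge 0$ is then achieved by approximating $f$ from above by smooth positive densities and using the stability furnished by (i)--(ii) to pass to the limit. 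Combining the two inequalities gives $(dd^c u^*)^n=f$, so $u^*$ is the required solution.
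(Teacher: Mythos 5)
First, a point of reference: the paper does not prove this statement at all --- it imports it verbatim as \cite[Theorem D]{BT} and uses it as a black box in the proof of Theorem \ref{thm:main_s} (2). So your sketch can only be measured against Bedford--Taylor's original argument, and in outline you have reproduced exactly their strategy: the Perron--Bremermann envelope for existence, the comparison/minimum principle for uniqueness (indeed the paper's quoted Theorem \ref{thm:minimum principle} gives uniqueness in one line), and a bump-and-lift argument reducing the equation for the envelope to solvability on small balls. Your uniqueness half, including the $v+\varepsilon|z|^2$ perturbation, is correct, and it is legitimate in a modern proof to get ball solvability for smooth positive densities from the continuity method (Caffarelli--Kohn--Nirenberg--Spruck) rather than from Bedford--Taylor's own interior estimates, and to use the 1982 Bedford--Taylor convergence theorem for increasing sequences.

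There is, however, a genuine gap at the step you dispatch in one sentence: interior continuity of the envelope. Squeezing $u$ between $U_\phi+A\rho$ and $U_\phi$ proves only that $u^*$ attains the boundary values $\phi$ continuously, because the two barriers coincide only on $\partial\Omega$; in the interior they are far apart, so this gives no interior modulus of continuity and does not even show $u=u^*$ there. This is not a removable technicality, because your own bump argument presupposes it: to solve the Dirichlet problem on a small ball $B$ with data $u^*|_{\partial B}$ you already need $u^*$ continuous on $\partial B$, so as arranged the argument is circular precisely at the step that is hardest in \cite{BT}. There, interior continuity is obtained first in the ball via the automorphism/translation argument yielding Lipschitz and second-order difference bounds on the envelope (the famous interior estimate), and then propagated to general strictly pseudoconvex domains; modern expositions instead derive continuity by approximating $\phi$ and $f$, solving the smooth problems, and using stability from the comparison principle. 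Either way, a substantive argument must be inserted where you wrote ``squeezing.'' A minor further point: in the bump step the contradiction via a value ``strictly larger at the center'' is neither needed nor quite justified; the clean conclusion is that the lift $\tilde u$ lies in $\mathcal F$, whence $\tilde u\le u\le u^*\le\tilde u$ on $B$, so $u^*$ itself solves the equation on $B$.
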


We also use the minimum principle:

\begin{theorem}[{\cite[Theorem A]{BT}}]\label{thm:minimum principle}
	Let $\Omega \subset \mathbb{C}^n$ be a bounded open set and $u,v \in C(\overline{\Omega}) \cap PSH(\Omega)$.
	When $(dd^c u)^n \leq (dd^c v)^n$ on $\Omega$, then it holds that
	$$\min_{\overline{\Omega}} (u-v) = \min_{\partial \Omega} (u-v). $$
	In other words, if $(dd^c u)^n \leq (dd^c v)^n$ on $\Omega$ and $u \geq v$ on $\partial \Omega$, then $u \geq v$ also on $\Omega$. 
\end{theorem}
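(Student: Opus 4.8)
The plan is to establish the ``in other words'' reformulation, since it is equivalent to the displayed identity. Indeed, set $m := \min_{\partial\Omega}(u-v)$ and replace $v$ by $v+m$; adding a constant does not change $(dd^c v)^n$, so the hypothesis $(dd^c u)^n \le (dd^c v)^n$ persists, while now $u \ge v$ on $\partial\Omega$. If such data forces $u \ge v$ on $\Omega$, then $u-v \ge m$ on $\overline\Omega$, i.e. $\min_{\overline\Omega}(u-v) \ge \min_{\partial\Omega}(u-v)$; the reverse inequality is trivial because $\partial\Omega \subset \overline\Omega$. Hence it suffices to prove: if $(dd^c u)^n \le (dd^c v)^n$ on $\Omega$ and $u \ge v$ on $\partial\Omega$, then $u \ge v$ on $\Omega$.

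The main tool is the Bedford--Taylor comparison principle: for $u,v \in C(\overline\Omega)\cap PSH(\Omega)$ with $\liminf_{z\to\partial\Omega}(u-v)\ge 0$,
$$\int_{\{u<v\}}(dd^c v)^n \le \int_{\{u<v\}}(dd^c u)^n.$$
I would prove this first for functions smooth up to $\overline\Omega$ by writing the telescoping identity $(dd^c v)^n-(dd^c u)^n = dd^c(v-u)\wedge\sum_{j=0}^{n-1}(dd^c v)^j\wedge(dd^c u)^{n-1-j}$, integrating over the open set $\{u<v\}$ (on whose relative boundary $u=v$), and applying Stokes' theorem twice so that the boundary contributions, governed by $v-u$ and its differential, are controlled; equivalently one compares the total masses of $u$ and $w:=\max(u,v)$, using that $w=v$ on $\{u<v\}$, that $w=u$ near $\partial\Omega$, and the Stokes-type identity $\int_\Omega(dd^c w)^n=\int_\Omega(dd^c u)^n$ valid when $w=u$ on $\partial\Omega$. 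To pass from smooth functions to continuous $u,v$ I would invoke the foundational Bedford--Taylor machinery: the definition of $(dd^c u)^n$ as a positive Radon measure for bounded $u$ via the inductive formula $(dd^c u)^k=dd^c\big(u\,(dd^c u)^{k-1}\big)$, together with the continuity of the Monge--Amp\`ere operator along decreasing sequences of continuous plurisubharmonic functions, approximating $u,v$ from above by smooth ones. This analytic step --- making sense of the wedge products and justifying the integrations by parts for merely continuous potentials --- is the main obstacle; the smooth case and the linear-algebra bookkeeping are routine, but the convergence theorems are the substantive input.

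Granting the comparison principle, I would conclude by a strict-perturbation argument. Fix $M' > \max_{\overline\Omega}|z|^2$, put $h(z):=|z|^2-M' < 0$ on $\overline\Omega$, and for $\delta>0$ set $v_\delta := v+\delta h$. Since $h\le 0$ we have $u\ge v\ge v_\delta$ on $\partial\Omega$, and in fact $u-v_\delta\ge \delta(M'-\max_{\overline\Omega}|z|^2)>0$ there, so $U_\delta:=\{u<v_\delta\}$ is relatively compact in $\Omega$ and $(dd^c u)^n$ has finite mass on it. Expanding by multilinearity, $(dd^c v_\delta)^n=\sum_{j=0}^n\binom{n}{j}\delta^j(dd^c v)^{n-j}\wedge(dd^c h)^j$, and discarding all but the $j=0$ and $j=n$ terms (each mixed Monge--Amp\`ere current of plurisubharmonic functions being a positive measure) gives $(dd^c v_\delta)^n\ge (dd^c v)^n+\delta^n(dd^c|z|^2)^n=(dd^c v)^n+\delta^n c_n\,d\lambda$ with $c_n>0$. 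Using the hypothesis $(dd^c v)^n\ge (dd^c u)^n$ and applying the comparison principle to $u$ and $v_\delta$ on $U_\delta$,
$$\int_{U_\delta}(dd^c u)^n+\delta^n c_n\,\lambda(U_\delta)\le\int_{U_\delta}(dd^c v_\delta)^n\le\int_{U_\delta}(dd^c u)^n.$$
As the right-hand integral is finite, $\lambda(U_\delta)=0$; but $U_\delta$ is open, hence $U_\delta=\emptyset$. Finally, any point with $u<v$ lies in $U_\delta$ for all small $\delta$ (because $u-v_\delta\to u-v$ pointwise), so $\{u<v\}=\bigcup_{\delta>0}U_\delta=\emptyset$, i.e. $u\ge v$ on $\Omega$, as required.
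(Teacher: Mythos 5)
The paper offers no proof of this statement at all: it is quoted verbatim as Theorem A of Bedford--Taylor \cite{BT}, so the only meaningful comparison is with the argument in that reference. Your proposal is correct in outline and reconstructs what has become the standard proof. The reduction of the displayed min-identity to the ``in other words'' form by translating $v$ by $m=\min_{\partial\Omega}(u-v)$ is fine; the strict perturbation $v_\delta=v+\delta(|z|^2-M')$ is exactly the right device, and your bookkeeping is sound: $U_\delta$ is relatively compact because $u-v_\delta$ is continuous on $\overline\Omega$ and strictly positive on $\partial\Omega$, the mass $\int_{U_\delta}(dd^cu)^n$ is therefore finite, so the subtraction yielding $\delta^n c_n\,\lambda(U_\delta)\le 0$ is legitimate, and the exhaustion $\{u<v\}=\bigcup_{\delta>0}U_\delta$ holds since $v_\delta\le v$ and $v_\delta\to v$ pointwise. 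Two caveats on provenance and completeness. First, the integral comparison principle you take as the main tool is really the route of Bedford--Taylor's later (1982) capacity paper and of the standard textbook treatments; the 1976 paper cited here proves Theorem A somewhat differently, handling the smooth case via the Hessian inequality $dd^cu\ge dd^cv$ at an interior minimum of $u-v$ (plus the same strict perturbation) and then passing to continuous functions by their convergence theorem. Second, as you candidly acknowledge, essentially all the analytic content lives in the foundational machinery you invoke rather than prove: the definition of $(dd^cu)^n$ for continuous plurisubharmonic $u$, integration by parts for such currents, and continuity of the operator along decreasing sequences; your smooth-case Stokes sketch also needs the usual repair for the possibly irregular boundary of $\{u<v\}$ (work with $\{u<v-\epsilon\}$ for almost every $\epsilon$, or run the cleaner $\max(u,v)$ total-mass comparison you mention). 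Importantly, there is no circularity in quoting that machinery, since the Bedford--Taylor convergence theorem is established independently of the minimum principle; with these provisos your argument is a correct reduction of the quoted theorem to those foundational results.
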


\begin{proof}[Proof of Theorem \ref{thm:main_s} (2).]
Let us assume that $\widetilde{\Omega}$ is strictly pseudoconvex. 
By Theorem \ref{thm:solutions of complex MA equations}, there exists a plurisubharmonic function $\widetilde{u}$ on $\widetilde{\Omega}$ which is continuous on a neighborhood of $\overline{\widetilde{\Omega}}$ such that
\begin{align*}
(dd^c \widetilde{u})^2 &= 0 \hspace{4cm}\text{ on }\Omega, \text{ and}\\
\widetilde{u} &= -\max(\log|z|^2, C) \hspace{1cm}\text{ on }\partial\Omega,
\end{align*}
where $C$ is a sufficiently negative constant, which we will choose later in this proof.
We let $\widetilde{G} = \widetilde{G}_C := \log|z|^2 + \widetilde{u}$.
Then, on $\partial \Omega$, we have
$$\widetilde{G} = \log|z|^2 - \max(\log |z|^2, C) \leq \log |z|^2 - \log |z|^2 = 0. $$
Therefore, by the maximum principle, we have that $\widetilde{G} \leq 0$ on $\widetilde{\Omega}$.
By continuity of $\widetilde{u}$, we can take $\widetilde{A} = \widetilde{u}$ and $\widetilde{B} = -\widetilde{u}$.
We have to check
$$\int_{|w|<1} e^{-\widetilde{u}(0,w)} d\lambda(w) < \pi e^{B(0)}.$$
Recall that we write the Green function on $\Omega$ with a pole at 0 as
$$G_{\Omega, 0} = \log |z|^2 - B(z).$$
Note that $B$ is a harmonic function. We can take a sufficiently negative $C$ such that
$$\max(\log |z|^2, C) - B(z) < 0  \text{ on }\Omega.$$
Then, the function $\widetilde{u}'(z,w) := -B(z)$ as a function on $\widetilde{\Omega}$ satisfies the following conditions:
\begin{align*}
(dd^c \widetilde{u}')^2 = 0 \text{ on } \widetilde{\Omega}, \text{ and}\\
\widetilde{u}' = -B(z) \text{ on }\partial \widetilde{\Omega}.
\end{align*}
Since $-\max(\log |z|^2, C) \geq -B(z)$ on $\Omega$ (and thus on $\partial \widetilde{\Omega}$), Theorem \ref{thm:minimum principle} yields that $\widetilde{u} \geq \widetilde{u}'$ on $\widetilde{\Omega}$. In particular, it holds that $\widetilde{u}(0, w) \geq \widetilde{u}'(0,w) = -B(0)$ for $|w|<1$.
The boundary value of $\widetilde{u}$ on $\{z=0\} \cap \partial \widetilde{\Omega}$ is $-\max(\log 0 , C) = -C>B(0)$ and $\widetilde{u}$ is continuous on $\widetilde{\Omega}$ up to boundary, thus we have that the set $\{w: |w|<1, \widetilde{u}(0,w) > -B(0) \}$ has positive measure. Thus we have a strict inequality
$$ \int_{|w|<1} e^{-\widetilde{u}(0,w)} d\lambda(w) < \int_{|w|<1} e^{B(0)} d\lambda(w) = \pi e^{B(0)}.$$
\end{proof}

\section{Radial case}\label{sect:radial}
In radial cases, we can write $\widetilde{G}$ explicitly and prove that the estimate can be the best possible.
Assume that $\Omega = \Delta$ is the unit disc in $\mathbb{C}$ and $\varphi$ is radial, i.e.\ $\varphi(z)$ depends only on $|z|$.
Then we can write as $\varphi(z) = u(\log |z|^2)$ for a convex increasing function $u$ on $\mathbb{R}_{<0}$. First we assume that $u$ is strictly increasing and $\lim_{t \to -0} u(t) = +\infty$.
Define
$$\psi (w) := -u^{-1}(-\log|w|^2).$$
Then we can take $\widetilde{G}$ as follows:
$$\widetilde{G} = \log |z|^2 + \psi(w).$$
We prove the following
\begin{proposition}\label{prop:const}
It holds that
$$\int_\Delta e^{-\varphi(z)} d\lambda(z) = \int_{|w|<1} e^{-\psi(0,w)}. $$
\end{proposition}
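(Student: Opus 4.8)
The plan is to read both sides of the identity as the Lebesgue volume of the Hartogs domain $\widetilde{\Omega}$, computed by Fubini in two different orders. The point is that $\psi$ is designed exactly so that the defining inequality of $\widetilde{\Omega}$ becomes symmetric in the variables $z$ and $w$, and it is this symmetry that forces the two integrals to agree.

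First I would establish a dual description of $\widetilde{\Omega}$. Because $u$ is strictly increasing, $u^{-1}$ is well-defined and strictly increasing, so for $z \in \Delta$ and $0<|w|<1$ one has the chain
\begin{align*}
|w|^2 < e^{-\varphi(z)}
&\iff u(\log|z|^2) < -\log|w|^2 \\
&\iff \log|z|^2 < u^{-1}(-\log|w|^2) = -\psi(w) \\
&\iff |z|^2 < e^{-\psi(w)} .
\end{align*}
Consequently
\[
\widetilde{\Omega} = \{(z,w) : z \in \Delta,\ |w|^2 < e^{-\varphi(z)}\} = \{(z,w) : |w|<1,\ |z|^2 < e^{-\psi(w)}\}.
\]
Here I would use $\lim_{t \to -0} u(t) = +\infty$ together with $\varphi(0)=0$ (equivalently $\lim_{t\to -\infty} u(t) = 0$) to guarantee that $u^{-1}$ is defined on all of $(0,+\infty)$, so that $\psi$ is defined and positive on the punctured disc $0<|w|<1$. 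In particular $e^{-\psi(w)} \le 1$, so the slice $\{|z|^2 < e^{-\psi(w)}\}$ automatically lies inside $\Delta$ and the second description needs no further truncation by $\Delta$.

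Then I would apply Fubini to this set equality. Integrating out $w$ first, each fibre is a disc of area $\pi e^{-\varphi(z)}$, giving
\[
\operatorname{Vol}(\widetilde{\Omega}) = \int_\Delta \pi e^{-\varphi(z)} \, d\lambda(z);
\]
integrating out $z$ first, each fibre is a disc of area $\pi e^{-\psi(w)}$, giving $\operatorname{Vol}(\widetilde{\Omega}) = \int_{|w|<1} \pi e^{-\psi(w)}\, d\lambda(w)$. Equating the two and cancelling $\pi$ yields the proposition. (Alternatively, a direct radial computation collapses each side to $\pi \int_{-\infty}^{0} e^{t-u(t)}\, dt$, via $t = \log|z|^2$ on the left and via $t = u^{-1}(-\log|w|^2)$ followed by one integration by parts on the right; I prefer the Fubini argument because it makes the $\varphi \leftrightarrow \psi$ duality transparent.) The only real difficulty is bookkeeping rather than substance: one must check that the equivalences above hold up to the loci $\{z=0\}$ and $\{w=0\}$, but these are Lebesgue-null and do not affect the integrals. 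This is precisely where the two standing hypotheses enter — strict monotonicity of $u$ makes $u^{-1}$ single-valued so that the inequality can be inverted, and $\lim_{t\to-0}u(t)=+\infty$ ensures that $u^{-1}$ has full domain $(0,\infty)$, matching the entire punctured disc $0<|w|<1$.
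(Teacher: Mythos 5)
Your proof is correct, but it takes a genuinely different route from the paper. The paper argues by direct one-variable computation: polar coordinates and $t=2\log r$ reduce both sides to integrals over $(-\infty,0)$; the substitution $s=u^{-1}(-t)$ turns the right-hand side into $\int_{-\infty}^0 e^{s-u(s)}u'(s)\,ds$, and the difference of the two sides becomes $\int_{-\infty}^0 (1-u'(t))e^{t-u(t)}\,dt$, which vanishes because it is $\int d\bigl(e^{t-u(t)}\bigr)$ and $t-u(t)\to-\infty$ at both endpoints (using $u(-0)=+\infty$). You instead identify both sides with $\pi^{-1}\operatorname{Vol}(\widetilde{\Omega})$ via Tonelli after inverting the defining inequality of $\widetilde{\Omega}$ into the dual description $|z|^2<e^{-\psi(w)}$ — in effect the same substitution, but packaged geometrically, which makes the $\varphi\leftrightarrow\psi$ duality transparent and also explains conceptually why $\psi$ was defined the way it was. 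Your version has two modest technical advantages: it never differentiates $u$, whereas the paper's manipulation of $u'(s)$ requires one to recall that a convex $u$ is only locally Lipschitz, so $u'$ exists a.e.\ and the final step is really the fundamental theorem of calculus for the absolutely continuous function $e^{t-u(t)}$ (note $t\mapsto t-u(t)$ is not monotone, so it is not literally a change of variables); and Tonelli applies to the nonnegative integrands with no integrability caveat. You are also right that the standing normalizations are what make the dual description global: $u(-0)=+\infty$ together with $\inf u=\varphi(0)=0$ make $u^{-1}$ a bijection from $(0,\infty)$ onto $(-\infty,0)$, so $\psi$ is defined and positive on the whole punctured disc and the loci $\{z=0\}$, $\{w=0\}$ are Lebesgue-null, as you observe. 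Both arguments are complete; yours is arguably the cleaner one, while the paper's is the more elementary calculation.
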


\begin{proof}
We prove
$$ 2\pi \int_{r=0}^1 e^{-u(\log r^2)} rdr = 2\pi \int_{r=0}^1 e^{u^{-1}(-\log r^2)} rdr.$$
Using the substitution $2\log r = t$, this is equivalent to
$$\int_{t=-\infty}^0 e^{-u(t)} e^t dt = \int_{-\infty}^0 e^{u^{-1}(-t)}e^t dt.$$
In the right-hand side, letting $u^{-1}(-t) = s$, we have that
$$(RHS) = \int_0^{-\infty} e^{s}e^{-u(s)} (-u'(s))ds = \int_{-\infty}^0 e^{s-u(s)} u'(s) ds. $$
Then we can calculate the difference as follows: \begin{align*}
&\int_{t=-\infty}^0 e^{-u(t)} e^t dt -  \int_{s=-\infty}^0 e^{s-u(s)} u'(s) ds\\
&= \int_{t=-\infty}^0 (1-u'(t))e^{t-u(t)}dt \\
&= \int_{q=-\infty}^{-\infty} e^q dq = 0.
\end{align*}
In the last line, we use the substitution $t-u(t) = q$ and this integral equals to 0 because both endpoints of the interval of integration are $-\infty$.
\end{proof}
By Proposition \ref{prop:const} and Theorem \ref{thm:main_s}, when $u$ is strictly increasing and $u(-0) = +\infty$, there exists a holomorphic function $f$ on $\Delta$ such that $f(0) = 1$ and 
$$\int_\Delta |f|^2 e^{-\varphi}\leq \int_\Delta e^{-\varphi}.$$
For a general radial subharmonic function $\varphi$, apply this estimate to $\varphi - \epsilon \log(1-|z|^2)$ and take a limit $\epsilon \downarrow 0$.

When the weight is radial, the constant function 1 has the smallest $L^2$-norm with weight $\varphi$ among extensions of the function 1 on the subvariety $\{0\}$ (by the mean-value inequality). By Proposition \ref{prop:const}, we can show that the estimate in Theorem \ref{thm:main_s} provides the $L^2$-minimum extension in radial cases.

\vskip3mm
{\bf Acknowledgment. }
The author would like to thank Prof.\ Shigeharu Takayama, Prof.\ Takeo Ohsawa, Prof.\ Sachiko Hamano and Dr.\ Takayuki Koike for valuable comments and fruitful discussions.
Most of this paper was written at the time of the author's visit at the University of Stavanger. He is grateful to Prof.\ Alexander Rashkovskii for enormous supports.
The author is supported by Program for Leading Graduate Schools, MEXT, Japan.
He is also supported by the Grant-in-Aid for Scientific Research (KAKENHI No.15J08115).

\bibliographystyle{plain}

\end{document}